\documentclass[10pt,one side,emlines]{amsart}
\usepackage{amssymb,latexsym,xy,eucal,mathrsfs}
\usepackage{tikz}
\usetikzlibrary{arrows.meta}
\usepackage{graphicx, graphics}
\usepackage{caption}
\usepackage{wrapfig}
\newtheorem{lem}{Lemma}[section]

\newtheorem{theorem}[lem]{Theorem}
\newtheorem{cnj}[lem]{Conjecture}
\newtheorem*{theorem*}{Main Theorem}
\newtheorem{thm}[lem]{Theorem}

\theoremstyle{definition}

\newtheorem*{cthm*}{Conjecture}

\newcommand{\ie}{${\it{i.e.,}}$ }

\begin{document}
	\baselineskip 15truept
	
	\subjclass[2020]{Primary 05C15, Secondary 06A12, 13A70} %
	
	\title{ A Proof of the Conjecture on complemented zero-divisor graphs of semigroups}
	\maketitle
	\markboth{Anagha Khiste, Ganesh Tarte and Vinayak Joshi}{A proof of the conjecture-3}\begin{center}\begin{large} $^\text{a}$Anagha Khiste, $^\text{b}$Ganesh Tarte and $^\text{c}$Vinayak Joshi \end{large}\begin{small}\vskip.1in$^\text{a}$\emph{Department of Applied Science and Humanities, Indian Institute of Information Technology, Pune - 410507 }\\$^\text{b}$\emph{Department of Applied Sciences and Humanities, Pimpri Chinchwad College of Engineering, Pune - 411044}\\$^\text{c}$\emph{Department of Mathematics, Savitribai Phule Pune University, Pune - 411007, Maharashtra, India\\E-mail: avanikhiste@gmail.com, ganesh.tarte@pccoepune.org, vvjoshi@unipune.ac.in, vinayakjoshi111@yahoo.com }\end{small}\end{center}\vskip.2in

\begin{abstract}
		In this paper, we are motivated by the conjectures proposed by C.~Bender \textit{et al.}, \cite{C} in 2024. We have settled the first two conjectures negatively by providing  a counter example in \cite{KTJ}, whereas in this paper, we prove the third conjecture positively, which has remained an open question until now.
		The third conjecture is stated as if $G(S)$ is uniquely complemented with the  clique number $3$ or greater and has the property that every vertex has a unique complement, then the graph $G(S)$ is isomorphic to the graph $G(\mathcal{P}(n))$, where $n$ is the clique number of $G(S)$.

	\end{abstract}
	
	\maketitle \noindent{ \small \textbf{Keywords}: Clique number, commutative semi-group,  complemented graph, reduced graph, uniquely complemented graph, zero-divisor graph.} 
	 
	\maketitle
	
	\section{Introduction}
	The zero-divisor graph associated with a commutative ring was first introduced by I. Beck \cite{B} with the goal of studying graph coloring of rings. In Beck’s construction, the vertex set of the zero-divisor graph is given by the set of all elements of the ring and two vertices are adjacent in the graph if their product is	zero. Anderson-Livingston \cite{AL} modified this construction so the vertex set corresponds to only elements of the ring which are nonzero zero-divisors. This construction is generalized to the zero-divisor graph associated with a semigroup in \cite{DD,DMS}. In recent years, there has been significant progress on the study of the zero-divisor graph associated with a semigroup.

	Throughout, let $S$ denote a commutative semigroup with $0$. The set of zero-divisors of $S$, denoted $Z(S)$, forms an ideal in $S$. The zero-divisor graph associated with $S$, denoted $G(S)$, is the graph whose vertices are given by the nonzero zero-divisors and where two distinct vertices $a$ and $b$ are	adjacent precisely when $ab = 0$.
	
	In a graph $G$, denote the vertex set of the graph by $V(G)$. The \textit{neighborhood} of vertex $a \in V(G)$, denoted by $N(a)$, is the set of vertices adjacent to $a$. Define $\overline{N(a)} = N(a) \cup \{a\}$. The degree $deg(a)$ of a vertex $a$ in
	$G$ is the number of edges incident to $a$, \textit{i.e.,} $deg(a)=|N(a)|$. A \textit{complete graph} is a graph	where every pair of distinct vertices is adjacent. A vertex of degree 1 is called an \textit{end}. A complete graph on $n$ vertices is denoted $K_n$.  A set $I$ of vertices of a graph $G$ is said
	to be independent if no two vertices $u$ and $v$ in $I$ are adjacent in $G$. If $|I| = n$, then we denote $I_n$, the independent set on n vertices. A \textit{clique} in a graph $G$ is a sub-graph of $G$ where every pair of distinct vertices in the sub-graph is adjacent. The	\textit{clique number} of a graph $G$ is the number of vertices in a maximal clique in $G$ and is denoted $\omega(G)$. If	a graph $G$ contains a cycle, then the core is defined to be the subgraph induced on all vertices which are contained in any cycle. If the graph $G$ does not contain any cycle, then the core of $G$ is empty. Two distinct vertices $a$ and $b$ are called \textit{orthogonal}, denoted $a\perp b$, if $a$ and $b$ are adjacent and there does not exist a vertex $c$ adjacent to both $a$ and $b$. Equivalently, adjacent vertices $a$ and $b$ are \textit{orthogonal} if edge $a - b$ is not an edge of any triangle (3-cycle). A graph $G$ is called \textit{complemented} if for every vertex $a$, there exists a vertex $b$ such that $a\perp b$. A graph $G$ is called \textit{uniquely complemented} if $G$ is complemented and has the property that if $a, b, c$ are distinct vertices with $a\perp b$ and $a\perp c$ then $N(b) = N(c)$.  This graph-theoretic property has been studied in the context of the zero-divisor graph associated with a commutative ring, for example in \cite{L}.

	Let $\mathcal{P}(n)$ denote the semigroup given by the power set of a set of $n$ elements under the operation of intersection.
	The author's C. Bender et al. \cite{C} raised the following three conjectures for a commutative semigroup $S$ with 0.
	
	\begin{cnj}\label{c1} If $G(S)$ is a complemented zero-divisor graph with clique number $n \geq 3$, then $G(S)$ has reduced graph $G_r$ which is isomorphic to the graph $G(\mathcal{P}(n))$.
	\end{cnj}
	\begin{cnj} \label{c2} If $G(S)$ is a complemented zero-divisor graph with clique number $3$ or greater, then $G(S)$ is uniquely complemented.
	\end{cnj}
		 \begin{cnj}\label{c3}
	If $G(S)$ is uniquely complemented with clique number $3$ or greater and has the property that every vertex has a unique complement, then graph $G(S)$ is isomorphic to graph $G(\mathcal{P}(n))$ where $n$ is the clique number of $G(S)$.
	\end{cnj} 

We settled the first two Conjectures \ref{c1} and \ref{c2} negatively by giving the counter example; (\textit{see} \cite{KTJ}), and now answering  Conjecture~\ref{c3} positively in the next section as  Theorem \ref{2.2} .
	
	\section{A Proof of the Conjecture-3}
	
	In \cite{DD}, the following results are established for the zero-divisor graph associated with a commutative	semigroup $S$ .
	\begin{thm}\label{2.1} Let $G(S)$ denote the zero-divisor graph associated with a commutative semigroup $S$.
	Then $G(S)$ has the following properties.
\begin{enumerate}\item The graph $G(S)$ is connected.
\item $G(S)$ has diameter $\leq 3$.
\item If $G(S)$ contains a cycle, then the graph $G(S)$ consists of a core, which is a union of quadrilaterals and	triangles, and any vertex not in the core of $G(S)$ is a vertex of degree 1.
	\item For each pair $x$ and $y$ of non-adjacent vertices, there is a vertex $z$ with $N(x)\cup N(y)\subseteq \overline{N(z)}$.
\end{enumerate}\end{thm}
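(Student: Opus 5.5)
We prove the four parts in the order (2), (1), (4), (3), since connectivity follows from the diameter bound and (4) is the tool for (3). Throughout, two distinct vertices $x,y$ are non-adjacent exactly when $xy\neq 0$. The basic trick is that a product of zero-divisors is again a zero-divisor, and it annihilates everything either factor annihilates.

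For (2), take distinct non-adjacent vertices $x,y$, and choose nonzero $a$ with $xa=0$ and nonzero $b$ with $yb=0$.
\begin{itemize}
\item If $a=b$, then $x-a-y$ is a path of length $2$. This needs $a\neq x,y$; if $a$ equals one of them we get adjacency or loops that are easily discarded.
\item If $ab\neq 0$, then $x(ab)=0=y(ab)$, so $ab$ is a vertex. Moreover $ab\neq x$, since otherwise $xy=a(by)=0$; symmetrically $ab\neq y$. Hence $x-ab-y$ is a path of length $2$.
\item If $ab=0$, then $x-a-b-y$ is a walk of length $3$.
\end{itemize}
Hence $d(x,y)\le 3$, and (1) follows immediately.

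For (4), let $x,y$ be non-adjacent and put $z=xy\neq 0$. For $w\in N(x)$ we have $wz=(wx)y=0$, so either $w=z$ or $w\in N(z)$; the same holds for $w\in N(y)$. Thus $N(x)\cup N(y)\subseteq\overline{N(z)}$. It remains to check that $z$ is a vertex, i.e.\ a zero-divisor. This holds because $N(x)\neq\emptyset$ by connectivity (assuming at least two vertices), and any $w\in N(x)$ gives $wz=0$. The case $w=z$ with $z^2=0$ is harmless.

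For (3) there are two claims.

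\emph{Vertices off the core have degree $1$.} Suppose $v$ lies on no cycle but has two neighbours $a\neq b$. Then $ab\neq0$, since otherwise $v,a,b$ form a triangle. Put $z=ab$; then $vz=(va)b=0$.
\begin{itemize}
\item If $z\notin\{a,b,v\}$, I would look for a short cycle through $v$ using $z$ together with the other neighbours of $a$ and $b$ controlled by (4). For example, any $w\in N(a)\setminus\{v\}$ satisfies $wz=0$, which closes a $4$-cycle $v-a-w-z-v$ when $w\neq z$.
\item If $z\in\{a,b,v\}$ (for instance $a=ab$, or $v=ab$ with $v^2=vab=0$), these degenerate cases must be handled separately using idempotent-like identities.
\end{itemize}

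\emph{The core is a union of triangles and quadrilaterals.} Take an edge $x-y$ on a cycle of minimal length $\ge5$, with neighbours $u-x-y-w$ on that cycle. Then $uw\neq 0$ by minimality. Also $x(uw)=0$ and $y(uw)=0$, so $uw$ is a common neighbour of $x$ and $y$ (after ruling out $uw\in\{x,y\}$ as before). This places the edge $x-y$ in a triangle, a contradiction.

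I expect the main obstacle to be the bookkeeping of the degenerate coincidences (products equal to one of the factors, nilpotent elements) in the degree-$1$ claim of (3). The generic case is a one-line product argument, so I would first list all coincidence cases and dispatch each by a direct computation.
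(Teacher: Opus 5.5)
\textbf{Parts (1), (2), (4).} These arguments are sound. Note that the paper gives no proof of this theorem: it imports it from \cite{DD}, and your choice $z=xy$ in (4) is the standard one.

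\textbf{The gap: the degree-one claim in (3).} Your sketch is purely local and never uses the hypothesis that $G(S)$ contains a cycle. Without that hypothesis the claim is false, so no local argument can succeed.
\begin{itemize}
\item \emph{The generic case gives nothing.} For the multiplicative semigroup of $\mathbb{Z}_2\times\mathbb{Z}_5$ the graph is the star $K_{1,4}$. Take $v=(1,0)$, $a=(0,2)$, $b=(0,3)$. Then $z=ab=(0,1)\notin\{a,b,v\}$, which is your generic case. But $a$ and $b$ are ends, so no $w$ exists and there is no cycle through $v$.
\item \emph{The coincidences are not contradictory.} Take the commutative semigroup $\{0,a,a^2,w,wa\}$ with $a^3=a^2$, $w^2=w$, $wa^2=0$. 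Its graph is the path $a-wa-a^2-w$. At $v=wa$, the neighbours $p=a^2$ and $q=a$ satisfy $pq=p$.
\end{itemize}
So ``dispatching each coincidence by direct computation'' will not yield a contradiction. The identity $pq=p$ has to be exploited, and the contradiction must come from the global cycle.

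\textbf{One way to finish.} Let $v$ lie on no cycle with $\deg v\ge 2$.
\begin{enumerate}
\item Let $p\ne q$ be neighbours of $v$, and suppose $p$ has a neighbour $w\ne v$. Then $pq\ne 0$, since otherwise $v,p,q$ is a triangle. Also $v(pq)=0=w(pq)$, so $pq\in(N(v)\cup\{v\})\cap(N(w)\cup\{w\})$. Each of $pq=v$, $pq=w$ and $pq\notin\{v,w,p\}$ gives either a triangle through $v$ or the $4$-cycle $v-pq-w-p-v$. Hence $pq=p$.
\item If $q$ were also not an end, then $p=pq=qp=q$. So $v$ has at most one non-end neighbour. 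Since $G(S)$ is connected and has a cycle, it is not a star centred at $v$. Hence $v$ has exactly one non-end neighbour $u$, and, as $\deg v\ge2$, an end neighbour $a$.
\item Take $w\in N(u)\setminus\{v\}$. Then $vw\ne 0$ (no triangle $v-u-w$) and $(vw)a=w(va)=0$, so $vw\in N(a)\cup\{a\}=\{v,a\}$. If $vw=a$ then $au=v(wu)=0$, a triangle; so $vw=v$.
\item Now any $t\in N(w)\setminus\{u,v\}$ satisfies $tv=t(vw)=(tw)v=0$, which gives the $4$-cycle $v-t-w-u-v$. Hence $N(w)=\{u\}$.
\item Thus every vertex other than $u$ and $v$ is an end attached to $u$ or to $v$. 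So $G(S)$ is a tree, contradicting the existence of a cycle.
\end{enumerate}

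\textbf{A smaller point in the core claim.} You cannot exclude $uw\in\{x,y\}$ ``as before'', because that argument used non-adjacency and $x-y$ is now an edge. Instead, suppose $uw=x$, and let $u'$ be the other cycle-neighbour of $u$. Then $u'x=(u'u)w=0$, so $x-u'$ is a chord giving a shorter cycle through $x-y$, contradicting minimality. The case $uw=y$ is symmetric.
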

	Now, we  prove  Conjecture~\ref{c3}.
	\begin{theorem}\label{2.2}
		Let $S$ be a commutative semigroup with $0$. Let $G(S)$ be the uniquely complemented graph such that every vertex has a unique complement. Let $\omega(G(S))=n \geq 3$. Then $G(S) \simeq G(\mathcal{P}(n))$, ({\it{i.e.,}} $G(S) \simeq G(\mathcal{P}(X))$, where $X=\{x_1,x_2, \cdots x_n\})$.  Furthermore, if $S=V(G(S))\cup \{0,1\}$, then $S \simeq \mathcal{P}(X) (=\mathcal{P}(n))$
	\end{theorem}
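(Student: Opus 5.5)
Write $G=G(S)$ and, for each vertex $a$, let $a'$ denote its unique complement. Since complements are unique, $a\mapsto a'$ is an involution on $V(G)$ with $a''=a$.

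\textbf{Step 1: $G$ is reduced.} First I would show that $N(a)=N(b)$ forces $a=b$. Suppose $N(a)=N(b)$ with $a\neq b$. Then $a$ and $b$ are not adjacent, and $a\perp a'$. Because $N(a)=N(b)$, the vertex $b$ is adjacent to $a'$. An edge $b-a'$ lying in a triangle $b,a',c$ would give $c\in N(b)=N(a)$, so $a,a',c$ would be a triangle, which is impossible. Hence $b\perp a'$. Then $a'$ has the two complements $a$ and $b$, contradicting uniqueness.

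\textbf{Step 2: structure of neighbourhoods.} Using Theorem~\ref{2.1}(4), I would show the following.
\begin{enumerate}
\item $N(a)\cap N(a')=\emptyset$, which is immediate from orthogonality.
\item $N(a)\cup N(a')\cup\{a,a'\}=V(G)$. Take $x\notin N(a)\cup N(a')$. Applying Theorem~\ref{2.1}(4) to the non-adjacent pair $x,a$ gives a vertex $z$ with $N(x)\cup N(a)\subseteq \overline{N(z)}$. Since $a'\in N(a)$, either $z=a'$ or $z\in N(a')$. Combining this with the no-triangle property on the edge $a-a'$ should force $N(x)\subseteq N(a')$ up to $x$ itself, and then $x$ is adjacent to $a$ or equals $a'$.
\item Whenever $N(a)\subseteq N(b)$ (or similar comparabilities hold), the sets $N(\cdot)$ form a lattice.
\end{enumerate}
The aim is to define the order $a\le b \iff N(b)\subseteq N(a)$ on $V(G)\cup\{0,1\}$, where $0$ is the bottom (adjacent to everything) and $1$ is the top. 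I would then show this poset is a Boolean lattice with $a\wedge b=0 \iff a\sim b$ and with $a'$ as the lattice complement. Meets come from the semigroup product: $N(ab)\supseteq N(a)\cup N(b)$. Distributivity, or more precisely being a uniquely complemented lattice in which complement is the orthogonal vertex, should come from the identity $N(a)=\{x: x\le a'\}$ (excluding the degenerate cases).

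\textbf{Step 3: finiteness and identification with $\mathcal P(n)$.} The atoms are the vertices with maximal neighbourhoods. A maximum clique $\{x_1,\dots,x_n\}$ consists of pairwise meet-zero elements, and I expect a maximal clique to consist exactly of atoms. Every element $a$ is then determined by the set of atoms below it, namely the atoms not in $N(a)$. This gives an injection of $V(G)$ into the proper nonempty subsets of $X=\{x_1,\dots,x_n\}$, and it sends adjacency to disjointness. Surjectivity uses complements and products: for $A\subseteq X$, the element $\prod_{x\in X\setminus A} x'$ realises $A$. One must check that this product is nonzero, using Theorem~\ref{2.1}(4) and the fact that $n$ is finite. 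Hence $G\simeq G(\mathcal P(n))$. If moreover $S=V(G)\cup\{0,1\}$, then the product is the meet, since $N(ab)$ corresponds to the union of the missing atoms, and so $S\simeq \mathcal P(X)$.

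\textbf{Main obstacle.} The hard step is Step 2. There I must show that the neighbourhood order is genuinely Boolean, and in particular that the product $ab$ of non-adjacent vertices behaves like a meet, with $N(ab)=$ the set of vertices meeting $a$ or $b$ to zero. I would first try to prove that $x\in N(a)\iff x\le a'$ via repeated use of uniqueness of complements, in the same spirit as Step 1. Everything else should then follow from standard facts on uniquely complemented atomic lattices of finite width $n$.
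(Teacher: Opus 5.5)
Your Step 1 (the graph is reduced) is correct. The proposal nevertheless has a genuine gap exactly where you locate the main obstacle, and one intermediate claim is false.

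\textbf{The false claim.} Step 2(2), $N(a)\cup N(a')\cup\{a,a'\}=V(G)$, already fails in $G(\mathcal{P}(3))$, which satisfies all the hypotheses. For $a=\{x_1\}$ the unique complement is $a'=\{x_2,x_3\}$, and $N(a')=\{a\}$. So $\{x_1,x_2\}$ lies outside that union. In Boolean terms, $N(a)\cup N(a')$ contains only the elements below $a'$ or below $a$.

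\textbf{What Step 2 leaves unproved.} None of the statements Step 2 actually needs is established:
\begin{itemize}
\item that the neighbourhood order is a lattice;
\item that $ab$ is the meet (you only have the trivial inclusion $N(ab)\supseteq N(a)\cup N(b)$);
\item that $x\in N(a)$ forces $N(a')\subseteq N(x)$;
\item that graph complements are lattice complements.
\end{itemize}
These statements are essentially the whole theorem. The appeal to ``standard facts on uniquely complemented atomic lattices'' is only available once you have a lattice, atomicity, and the identification of complements. Uniquely complemented lattices need not be distributive in general.

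\textbf{What Step 3 inherits.} Step 3 rests on two further unproved claims. The first is that every vertex lies above some vertex of a maximum clique in your order. The second is that the products $\prod_{x\in X\setminus A}x'$ are nonzero and pairwise distinct; ``Theorem~\ref{2.1}(4) and finiteness of $n$'' is not an argument for this. Also, a maximal clique need not consist of atoms: $\{\{x_1\},\{x_2,x_3\}\}$ is a maximal clique of $G(\mathcal{P}(3))$. Only maximum cliques do.

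\textbf{The missing ingredient, as the paper supplies it.} The paper avoids lattice theory altogether. It fixes a maximum clique $a_1,\dots,a_n$ with complements $b_1,\dots,b_n$. After showing the $b_i$ are distinct and pairwise non-adjacent, its key lemma is that every $b_i$ is an end vertex, $N(b_i)=\{a_i\}$. This is proved with Theorem~\ref{2.1}(4) and $\omega(G(S))=n$, and it makes the multiplication computable:
\begin{itemize}
\item Since $a_ib_j\neq 0$, $a_ib_j\neq b_i$ and $b_i(a_ib_j)=0$, one gets $a_ib_j=a_i$ for $i\neq j$.
\item $b_i^2=b_i$, because $b_i^2\perp a_i$ and complements are unique.
\item $b_1\cdots b_n=0$ and $a_i=\prod_{j\neq i}b_j$.
\item $a_s\prod_{t\in T}b_t=a_s$ for $s\notin T$.
\end{itemize}
The last identity gives exactly the nonvanishing and distinctness of the $2^n-2$ partial products that your Step 3 needs. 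Your surjectivity construction is precisely the paper's family $A_k$. The paper then counts neighbourhoods ($\deg x=2^k-1$ for $x\in A_k$) to show this family is closed under taking neighbours, and connectivity shows it is all of $V(G)$. Without an analogue of the end-vertex lemma, or some other handle on products, the Boolean structure your plan relies on is never established.
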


\begin{proof}
		Let $(S,*)$ be a commutative semigroup with $0$. Let $G(S)$ be the uniquely complemented graph where every vertex has the unique complement and let $\omega(G(S))=n \geq 3$. Thus, there exists a maximal clique, say $\{a_1,a_2,\cdots,a_n\}$ in $G(S)$. \\ 
	
		For better understanding, we present the outline of the proof for the cases 
		$n=3$ and $n=4$.\\
		\textbf{For $n=3$:} Let $\{a_1,a_2,a_3\}$ be a clique in $G(S)$. Since every vertex has the unique complement in $G(S)$, for each $a_i$,  there exists the unique $b_i(\neq a_i)\in G(S)$ such that $a_i \perp b_i$. 
		\begin{center}
			\begin{tikzpicture}[scale=1.2]
				\coordinate (A) at (-2,0);
				\coordinate (B) at (-1.5,1);
				\coordinate (C) at (-1,0);
				\coordinate (D) at (-0.3,-0.7);
				\coordinate (E) at (-1.5,2);
				\coordinate (F) at (-2.7,-0.7);
				\draw (A) -- (B) -- (C) -- cycle;
				
				\draw (B) -- (E); \draw (A) -- (F);
				\draw (B) -- (C) -- (D) ;
				\fill (A) circle (2pt);
				\fill (B) circle (2pt);
				\fill (C) circle (2pt);
				\fill (D) circle (2pt);
				\fill (E) circle (2pt);
				\fill (F) circle (2pt);
				\node[left] at (A) {$a_1$};
				\node[below][right] at (C) {$a_3$};
				\node[above left] at (B) {$a_2$};
				\node[below right] at (D) {$b_3$};
				\node[above] at (E) {$b_2$};
				\node[below left] at (F) {$b_1$};
			\end{tikzpicture}
		\end{center}
		We claim that the family of sets $\{A_k~/~1\leq k\leq 2\}$, where $A_1=\{b_1, b_2,b_3\}$ and $A_2=\{a_1=b_2*b_3,a_2=b_1*b_3,a_3=b_1*b_2\}$ forms a partition of the vertex set $V(G(S))$. Moreover, we can show that $x^2=x,$ for every $x\in V(G(S))=A_1\cup A_2$ and $deg(x)=2^{k}-1$, for every $x\in V(G(S))$. Thus, for each $b_i\in A_1$, $deg(b_i)=1=2^1-1=2^k-1$ and $deg(a_i)=3=2^2-1=2^{3-1}-1=2^{n-1}-1$ and . Also, $G[A_1]\cong I_n$ and $G[A_2]\cong K_n$.
		In fact we obtain the binary operation on $V(G(S))$ as follows.
		
		\begin{center}
			\begin{tabular}{c|c c c c c c}
				$*$ & $a_1$ & $a_2$ & $a_3$ & $b_1$ & $b_2$ & $b_3$ \\ 
				\hline
				$a_1$ & $a_1$ & $0$ & $0$ & $0$ & $a_1$ & $a_1$ \\
				$a_2$ & $0$ & $a_2$ & $0$ & $a_2$ & $0$ & $a_2$ \\
				$a_3$ & $0$ & $0$ & $a_3$ & $a_3$ & $a_3$ & $0$ \\
				$b_1$ & $0$ & $a_2$ & $a_3$ & $b_1$ & $a_3$ & $a_2$ \\
				$b_2$ & $a_1$ & $0$ & $a_3$ & $a_3$ & $b_2$ & $a_1$ \\
				$b_3$ & $a_1$ & $a_2$ & $0$ & $a_2$ & $a_1$ & $b_3$
			\end{tabular}
		\end{center}
		Thus, we observe that, $b_1*b_2*b_3=0$ and $a_i(=b_j*b_k) \perp b_i, \text{for all distinct } b_i,b_j,b_k \in A_1.$ 
		Further, $|V(G(S))|=|A_1|+|A_2|=3+3=6=2^3-2=2^n-2=|V(G(\mathcal{P}(X)))|$, where $X=\{x_1,x_2,x_3\}$. Thus we define a bijective map $f:V(G(S))\rightarrow V(G(\mathcal{P}(X)))$ as $f(a_i)=\{x_i\},$ for all $a_i\in A_2=A_{n-1}$ and $f(b_i)=\{x_j,x_k\}=X\setminus \{x_i\}$, for all $b_i\in A_1$, where $i,j,k\in \{1,2,3\}$ are all distinct integers. Then we observe that $f$ is a graph isomorphism and hence $G(S)\cong G(\mathcal{P}(X))$.

		\vspace{1cm}

		\textbf{For $n=4$:} Let $\{a_1,a_2,a_3,a_4\}$ be a clique in $G(S)$. Since every vertex has the unique complement in $G(S)$, for each $a_i$,  there exists the unique $b_i(\neq a_i)\in G(S)$ such that $a_i \perp b_i$. 
		We claim that the family of sets $\{A_k~/~1\leq k\leq 3\}$, where $A_1=\{b_1, b_2, b_3\}$, $A_2=\{b_1*b_2,b_1*b_3,b_1*b_4,b_2*b_3,b_2*b_4, b_3*b_4\}$ and $A_3=\{a_1=b_2*b_3*b_4,a_2=b_1*b_3*b_4,a_3=b_1*b_2*b_4,a_4=b_1*b_2*b_3\}$ forms a partition of the vertex set $V(G(S))$. Moreover, we can show that $x^2=x,$ for every $x\in V(G(S))=A_1\cup A_2\cup A_3$ and $deg(x)=2^{k}-1$, for every $x\in V(G(S))$. Thus for each distinct $b_i,b_j\in A_1 $,  $deg(b_i)=1=2^1-1=2^k-1$, $deg(b_i*b_j)=3=2^2-1=2^{k}-1$ $deg(a_i)=7=2^3-1=2^{4-1}-1=2^{n-1}-1$ Also, $G[A_1]\cong I_n$ and $G[A_3]\cong K_n$.
		In fact we obtain the binary operation on $V(G(S))$ as follows.
		
		\begin{center}
			$\begin{array}{c|c|c|c|c|c|c|c|c|c|c|c|c|c|c|}
			
				\ast & a_1 & a_2 & a_3 & a_4 & b_1 \ast b_2 & b_1 \ast b_3 & b_1 \ast b_4 & b_2 \ast b_3 & b_2 \ast b_4 & b_3 \ast b_4 & b_1 & b_2 & b_3 & b_4  \\
				\hline
				a_1 & a_1 & 0 & 0 & 0 & a_1 & a_1 & a_1 & 0 & 0 & 0 & a_1 & a_1 & a_1 & 0  \\
				
				a_2 & 0 & a_2 & 0 & 0 & a_2 & 0 & 0 & a_2 & a_2 & 0 & a_2 & a_2 & 0 & a_2  \\
				
				a_3 & 0 & 0 & a_3 & 0 & 0 & a_3 & 0 & a_3 & 0 & a_3 & a_3 & 0 & a_3 & a_3  \\
				
				a_4 & 0 & 0 & 0 & a_4 & 0 & 0 & a_4 & 0 & a_4 & a_4 & 0 & a_4 & a_4 & a_4  \\
				
				b_1 \ast b_2  & 0 & 0 & a_3 & a_4 & 0 & a_3 & a_4 & a_3 & a_4 & b_1 \ast b_2 & a_3 & a_4 & b_1 \ast b_2 & b_1 \ast b_2 \\
				
				b_1 \ast b_3 & 0 & a_2 & 0 & a_4 & a_2 & 0 & a_4 & a_2 & b_1 \ast b_3 & a_4 & a_2 & b_1 \ast b_3 & a_4 & b_1 \ast b_3  \\
				
				b_1 \ast b_4  & 0 & a_2 & a_3 & 0 & a_2 & a_3 & 0 & b_1 \ast b_4 & a_2 & a_3 & b_1 \ast b_4 & a_2 & a_3 & b_1 \ast b_4  \\
				
				b_2 \ast b_3 & a_1 & 0 & 0 & a_4 & a_1 & a_1 & b_2 \ast b_3 & 0 & a_4 & a_4 & a_1 & b_2 \ast b_3 & b_2 \ast b_3 & a_4  \\
				
				b_2 \ast b_4 &  a_1 & 0 & a_3 & 0 & a_1 & b_2 \ast b_4 & a_1 & a_3 & 0 & a_3 & b_2 \ast b_4 & a_1 & b_2 \ast b_4 & a_3  \\
				
				b_3 \ast b_4 &  a_1 & a_2 & 0 & 0 & b_3 \ast b_4 & a_1 & a_1 & a_2 & a_2 & 0 & b_3 \ast b_4 & b_3 \ast b_4 & a_1 & a_2  \\
				
				b_1 & 0  & a_2 & a_3 & a_4 & a_2 & a_3 & a_4 & b_1 \ast b_4 & b_1 \ast b_3 & b_1 \ast b_2 & b_1 \ast b_4 & b_1 \ast b_3 & b_1 \ast b_2 & b_1  \\
				
				b_2 &  a_1 & 0 & a_3 & a_4 & a_1 & b_2 \ast b_4 & b_2 \ast b_3 & a_3 & a_4 & b_1 \ast b_2 & b_2 \ast b_4 & b_2 \ast b_3 & b_2 & b_1 \ast b_2  \\
				
				b_3 &  a_1 & a_2 & 0 & a_4 & b_3 \ast b_4 & a_1 & b_2 \ast b_3 & a_2 & b_1 \ast b_3 & a_4 & b_3 \ast b_4 & b_3 & b_2 \ast b_3 & b_1 \ast b_3  \\
				
				b_4 & a_1 & a_2 & a_3 & 0 & b_3 \ast b_4 & b_2 \ast b_4 & a_1 & b_1 \ast b_4 & a_2 & a_3 & b_4 & b_3 \ast b_4 & b_2 \ast b_4 & b_1 \ast b_4 	\end{array}$
	
		\end{center}
		\vskip5pt
		Thus, we observed that, $b_1*b_2*b_3*b_4=0$. Also, $a_i(=b_j*b_k*b_s) \perp b_i$, and \linebreak $(b_i*b_j) \perp (b_k*b_s), \text{for all distinct } b_i,b_j,b_k, b_s \in A_1.$ 
		
		\begin{center}
			\begin{tikzpicture}[scale=2.3]
				\coordinate (A) at (0,0);
				\coordinate (B) at (0,1);
				\coordinate (C) at (1,1);
				\coordinate (D) at (1,0);
				\coordinate (E) at (-0.5,1.5);
				\coordinate (F) at (1.5,1.5);
				\coordinate (G) at (1.5,-0.5);
				\coordinate (H) at (-0.5,-0.5);
				\coordinate (I) at (0.5,1.5);
				\coordinate (J) at (1.5,0.5);
				\coordinate (K) at (0.5,-0.5);
				\coordinate (L) at (-0.5,0.5);
				\coordinate (M) at (0.2,0.75);
				\coordinate (N) at (0.8,0.75);
				
				\draw (A) -- (B) -- (C) -- (D) --(A);
				
				\draw (A) -- (H); \draw (B) -- (E);
				\draw (C) -- (F); \draw (D) -- (G);
				\draw (B) -- (I) -- (C);
				\draw (C) -- (J) -- (D); 
				\draw (D) -- (K) -- (A);
				\draw (A) -- (L) -- (B);
				
				\draw (B) -- (M) -- (D);
				\draw (A) -- (N) -- (C); 
				
				\draw (K) -- (I);
				\draw (L) -- (J); 
				\draw (M) -- (N);
				
				\fill (A) circle (2pt);
				\fill (B) circle (2pt);
				\fill (C) circle (2pt);
				\fill (D) circle (2pt);
				\fill (E) circle (2pt);
				\fill (F) circle (2pt);
				\fill (G) circle (2pt);
				\fill (H) circle (2pt);
				\fill (I) circle (2pt);
				\fill (J) circle (2pt);
				\fill (K) circle (2pt);
				\fill (L) circle (2pt);
				\fill (M) circle (2pt);
				\fill (N) circle (2pt);
				
				\node[left] at (A) {$a_1$};
				\node[left] at (B) {$a_2$};
				\node[right] at (C) {$a_3$};
				\node[right] at (D) {$a_4$};
				\node[above left] at (E) {$b_2$};
				\node[above right] at (F) {$b_3$};
				\node[below right] at (G) {$b_4$};
				\node[below left] at (H) {$b_1$};
				\node[above] at (I) {$b_2 \ast b_3$};
				\node[below right] at (J) {$b_3 \ast b_4$};
				\node[below] at (K) {$b_1 \ast b_4$};
				\node[below left] at (L) {$b_1 \ast b_2$};
				\node[below] at (M) {$b_2 \ast b_4$};
				\node[below] at (N) {$b_1 \ast b_3$};
			\end{tikzpicture}
		\end{center}
		
		Further, $|V(G(S))|=|A_1|+|A_2|+|A_3|=4+6+4=14=2^4-2=2^n-2=|V(G(\mathcal{P}(X)))|$, where $X=\{x_1,x_2,x_3,x_4\}$. Thus we define a bijective map $f:V(G(S))\rightarrow V(G(\mathcal{P}(X)))$ as $f(a_i)=\{x_i\},$ for all $a_i\in A_3=A_{n-1}$, $f(b_i)=\{x_j,x_k,x_s\}=X\setminus \{x_i\}$, and $f(b_i*b_j)=\{x_k,x_s\}=X\setminus \{x_i,x_j\}$, for all distinct $b_i,b_j\in A_1$, where $i,j,k,s\in \{1,2,3,4\}$ are all distinct integers. Then we observe that $f$ is a graph isomorphism and hence $G(S)\cong G(\mathcal{P}(X))$.
	
	With this preparation, we prove the result for general $n$.
		
		\textbf{Now, for each $1\leq i \neq j\leq n$, we have the following observations:}

	\begin{enumerate}
		\item Clearly, $a_i$ is adjacent to $a_j$,  {\it{i.e.,}}   $a_i*a_j=0$. Thus, $G[\{a_i~|~1\leq i\leq n\}]\cong K_n$ with $n\geq 3$, where $G[H]$ denotes the induced subgraph generated by $H\subseteq V(G(S))$ This implies that the edge $a_i-a_j$ is an edge of a triangle. Thus, $a_i\not\perp a_j$, for all $i\neq j$, $1\leq i,j\leq n$.
		
		\item Since every vertex has the unique complement in $G(S)$, for each $a_i$,  there exists the unique $b_i(\neq a_i)\in G(S)$ such that $a_i \perp b_i$. Now, if $b_i=b_j,$ for some $i\neq j,$ $1\leq i,j\leq n$, then $b_i$ has two distinct complements namely, $a_i$ and $a_j$, which is not possible. Therefore, $b_i\neq b_j$, for all $i\neq j$, $1\leq i, j\leq n$. Further, if $b_i =a_j$ for some $i\neq j$, then $a_i\perp b_i$ gives that $a_i\perp a_j$, which is not possible. 		
	Also, $a_i$ and $b_j$ are not adjacent to each other, otherwise $a_i-b_j-a_j-a_i$ forms a triangle in $G(S)$, which contradicts to $a_j \perp b_j$. Hence \[a_i*b_j \neq 0, \quad \text{for all}~ i \neq j, \quad 1\leq i,j\leq n.   \hfill ----(A)\] 
	
	\item Now, we claim that $\{b_1,b_2,\cdots,b_n\}$ is an independent set. \\ Suppose, $b_i$ and $b_j$ are adjacent, for some $i\neq j$, where $1\leq i, j\leq n$.\\ Then we have the following cases:
		\medskip
	
	\textbf{Case (i):}  If the edge $b_i-b_j$ is \textit{not} an edge of any triangle, then $b_i \perp b_j$. Thus, $b_i$ would have two distinct complement namely, $a_i$ and $b_j$, which contradicts the assumption that every vertex has the unique complement. 
		\medskip
		
		\textbf{Case (ii):}  Suppose the edge $b_i-b_j$ is an edge of a triangle, say $x-b_i-b_j-x$  in $G(S)$. Then $x \notin \{b_i,b_j\}$, otherwise it is not a triangle. Since $a_i \perp b_i$ and $a_j \perp b_j$, and using equation $(A)$, we deduce that $x\notin \{b_i,b_j,a_k~|~1 \leq k \leq n\}$. Also, we have $x$ is not adjacent to $a_i$ as well as $a_j$ in $G(S)$. Further, consider the set 
		 \[
		 \{b_i, b_j, a_1,a_2,\cdots,a_{i-1},a_{i+1},\cdots,a_n\}=\{b_i, b_j, a_k\mid 1\leq k \neq i\leq n\} \subseteq N(x) \cup N(a_i).
		 \]
		 
		  Applying Theorem \ref{2.1} for $x$ and $a_i$, there exists a vertex $z\in V(G(S))$ such that \[N(x) \cup N(a_i) \subseteq \overline{N(z)}=N(z) \cup \{z\}.\] Therefore,\[\{b_i,b_j,a_k \mid 1\leq k \neq i\leq n\} \subseteq \overline{N(z)}=N(z) \cup \{z\}.\] Now, consider the following possibilities:
			\begin{itemize}
				\item If $z=a_j$, then $b_i \in N(a_j)$, which contradicts equation $(A)$. Thus $z\neq a_j$. 
			
		\item If $z=b_j$, then $b_i \neq z$ and hence $a_p \in N(b_j)$, for $1\leq p \neq i,j\leq n$, again contradicting equation $(A)$. Thus $b_j \neq z$.
			
		\item	If $z\not \in \{a_j, ~b_j\}$, but $a_j,b_j \in N(z)$ forms a triangle in $G(S)$, which contradicts $a_j \perp b_j$. Thus $b_j \neq z$.
	\end{itemize}
			Thus, there does not exist any vertex $x$, such that $x - b_i - b_j - x$ forms a triangle.

			Therefore, if $b_i$ and $b_j$ are adjacent for some $i \neq j$, $1\leq i, j\leq n$, in each case, we arrive at a contradiction. Hence $b_i$ and $b_j$ are non-adjacent for all $i \neq j$, $1\leq i, j\leq n$. That is \[b_i*b_j\neq 0, \quad \text{for all } i \neq j,  1\leq i, j\leq n. \hfill ----(B)  \] 
			\smallskip
Thus, $\{b_1,b_2,\cdots,b_n\}$ is an independent set.	Thus $G[\{b_i~|~1\leq i\leq n\}]\cong I_n$.  \\		

\begin{center}
	\begin{tikzpicture}[scale=1.2]

		\draw (-2.6,3) ellipse (0.6 and 1);
		\node at (-2.7,3.6) {$a_1$};
		\node at (-2.7,3.2) {$a_2$};
		\node at (-2.7,2.8) {$a_3$};
		\node at (-2.7,2.3) {$a_n$};
		\node at (-2.7,2.8) {$a_3$};
		\node at (-2.7,2.3) {$a_n$};								
		
		\draw (0.9,3.6) -- (-2.4,3.6);
		\draw (0.9,3.2) -- (-2.4,3.2);
		\draw (0.9,2.8) -- (-2.4,2.8);
		\draw (0.9,2.3) -- (-2.4,2.3);
		
		\fill (-2.4,3.6) circle (2pt);
		\fill (-2.4,3.2) circle (2pt);
		\fill (-2.4,2.8) circle (2pt);
		\fill (-2.4,2.6) circle (.6pt);
		\fill (-2.4,2.5) circle (.6pt);
		\fill (-2.4,2.4) circle (.6pt);
		\fill (-2.4,2.3) circle (2pt);
		\draw (1,3) ellipse (0.6 and 1);
		\node at (1.2,3.6) {$b_1$};
		\node at (1.2,3.2) {$b_2$};
		\node at (1.2,2.8) {$b_3$};
		\node at (1.2,2.3) {$b_n$};
		
		\fill (0.9,3.6) circle (2pt);
		\fill (0.9,3.2) circle (2pt);
		\fill (0.9,2.8) circle (2pt);
		\fill (0.9,2.6) circle (.6pt);
		\fill (0.9,2.5) circle (.6pt);
		\fill (0.9,2.4) circle (.6pt);
		\fill (0.9,2.3) circle (2pt);
		
		\node at (-2.5,1.5) {$G[\{a_i~|~1\leq i\leq n\}]\cong K_n$};
		\node at (1.5,1.5) {$G[\{b_i~|~1\leq i\leq n\}]\cong I_n$};
	\end{tikzpicture}
\end{center}

Now, since, $a_i \in N(b_i)$, for all $i$, $1\leq i\leq n$ and $b_i,~a_j \in N(a_i)$, for all $j \neq i$, $1\leq j, i\leq n$, we have, \[deg(b_i) \geq 1\quad \text{and}\quad deg(a_i) \geq n.\]

\item Now we claim that, $deg(b_i)=1,~~ \text{for all } i,~~ 1\leq i\leq n$. \\Suppose $deg(b_i)>1,$ for some $i$, $1\leq i\leq n$. Then there exist $x_i \in V(G(S)) \setminus \{a_i, b_i\}$ such that $x_i$ is adjacent to $b_i$. Together with equations $(A)$ and $(B)$, we deduce that \[x_i \not \in\{a_k, ~b_k~|~ 1 \leq k \leq n\}.\]

Now, we consider the following cases;
\medskip

\textbf{Case (i):}  If the edge $x_i-b_i$ is \textit{not} an edge of any triangle, then $x_i \perp b_i$. Thus $b_i$ would have two distinct complements namely, $a_i$ and $x_i$, which contradicts the fact that every vertex has the unique complement. 
\medskip

\textbf{Case (ii):}  Suppose the edge $x_i-b_i$ is an edge of a triangle, say $x_i-b_i-p_i-x_i$  in $G(S)$.
Clearly, $p_i\notin \{x_i, b_i\}$. Also, since $a_i\perp b_i$ and using equation $(A)$, we deduce that $p_i\notin \{a_k ~|~ 1\leq k\leq n\}$.

Now, since $\omega(G(S))=n$, the set $\{p_i,x_i,a_k ~|~ 1\leq k \neq i\leq n\}$, which contains $n+1$ vertices does not form a clique. 
Together with the fact that $p_i$ is adjacent to $x_i$ and $G[\{a_k ~|~ 1\leq k \neq i\leq n\}] \cong K_{n-1}$, this implies that either \[
x_i \notin \bigcap_{\substack{k=1 \\ k \ne i}}^{n} N(a_k) \quad \text{or} \quad p_i \notin \bigcap_{\substack{k=1 \\ k \neq i}}^{n} N(a_k).
\]
Assume that $x_i \notin \displaystyle \bigcap_{\substack{k=1 \\ k \ne i}}^{n} N(a_k).
$ Thus, we have $x_i \notin N(a_k)$, for some $k \neq i$, $1\leq k, i\leq n$. 

Applying Theorem \ref{2.1} for $x_i$ and $a_k$, there exists a vertex $z\in V(G(S))$ such that \[\{b_i, p_i,a_i,a_j,b_k ~|~1\leq  j (\neq ~ k,~i)\leq n\} \subseteq N(x_i) \cup N(a_k) \subseteq \overline{N(z)}=N(z) \cup \{z\}.\] Now, we consider the following possibilities:
\begin{itemize}		\item If $z=a_i$, then $b_k \in N(a_i)$, which contradicts equation $(A)$, thus $z \neq a_i$. 
	\item If $z=b_i$, then $a_j \in N(b_i)$, again  contradicting equation $(A)$, thus $z \neq b_i$.
	\item If $z \not \in \{a_i,b_i\}$, but $a_i,b_i \in N(z)$ forms a triangle in $G(S)$, which contradicts $a_i \perp b_i$.
\end{itemize}
Similarly, if we assume that $p_i \notin \displaystyle \bigcap_{\substack{k=1 \\ k \ne i}}^{n} N(a_k),$ then we have $p_i \notin N(a_k)$, for some $k \neq i$, $1\leq k, i\leq n$.

 Therefore, applying Theorem \ref{2.1} for $p_i$ and $a_k$, there exists a vertex $z\in V(G(S))$ such that \[\{b_i, x_i,a_i,a_j,b_k ~|~1\leq  j (\neq ~ k,~i)\leq n\} \subseteq N(p_i) \cup N(a_k) \subseteq \overline{N(z)}=N(z) \cup \{z\}.\] Now, we have the following possibilities:
\begin{itemize}
		\item If $z=a_i$, then $b_k \in N(a_i)$, which contradicts equation $(A)$, thus $z \neq a_i$. 
	\item If $z=b_i$, then $a_j \in N(b_i)$, again  contradicting equation $(A)$, thus $z \neq b_i$.
	\item If $z \not \in \{a_i,b_i\}$, but $a_i,b_i \in N(z)$ forms a triangle in $G(S)$, which contradicts $a_i \perp b_i$.
\end{itemize}
Therefore, if we assume that $deg(b_i)> 1$, in each case, we arrive at a contradiction. Thus, there does not exists a vertex $x_i \in V(G(S)) \setminus \{a_i, b_i\}$ such that $x_i$ is adjacent to $b_i$. This implies that $N(b_i)=\{a_i\}$, and hence  $deg(b_i)=1,$ for all $i$, $1\leq i\leq n$. That is each $b_i$ is an end vertex in $G(S)$.

\item We claim that, $a_i * b_j=a_i$, for all $i \neq j$, $1\leq i, j\leq n$. \\ Using equation $(A)$, we have $a_i * b_j \neq 0$, for any $i \neq j$, $1\leq i, j\leq n$. Then $b_i*a_i * b_j = 0$, which implies that $a_i * b_j \in V(G(S))$ such that $a_i * b_j\in N(b_i)\cup \{b_i\}$.  

If $a_i * b_j =b_i$, then $b_i*a_j=0$, which is not possible. 

Thus, $a_i * b_j\in N(b_i)=\{a_i\}$ and hence $a_i * b_j=a_i$, for all $i \neq j$, $1\leq i, j\leq n$. \hfill $---(C)$

\item We claim that $b_i^2=b_i*b_i=b_i$, for all $i$, $1\leq i\leq n$.

 Assume that $b_k^2\neq b_k$, for some $k$, 
$1\leq k\leq n$.
Using $(C)$ twice, we have $a_i*b_k^2=a_i$, for all $i \neq k$, $1\leq i,k \leq n$. Also, we have $a_k*b_k^2=a_k*b_k*b_k=0*a_k=0$.
\begin{itemize}
 \item If $b_k^2=0$, then $a_i=a_i*b_k^2=0$, which is not possible. Hence $b_k^2\neq 0$. 
 
 \item If $b_k^2=a_k$, then $a_i=a_i*b_k^2=a_i*a_k=0$ for all $i \neq k$, $1 \leq i,k \leq n$, which is also not possible. Hence $b_k^2\neq a_k$. 
 \end{itemize}
  Thus, $b_k^2\in V(G(S))\setminus \{a_k\}$ such that $b_k^2$ is adjacent to $a_k$. Together with $b_k$ is the unique complement of $a_k$ and $b_k^2\neq b_k$, we have $b_k^2\not\perp a_k$, \ie the edge $b_k^2-a_k$ is an edge of a triangle in $G(S)$.  Thus, there exist $x_k\in V(G(S))$ such that $a_k-b_k^2-x_k-a_k$ forms a triangle in $G(S)$. Further more, $a_k \perp b_k$, thus $x_k$ is not adjacent to $b_k$, \ie $x_k*b_k\neq 0$. But we have $x_k*b_k^2=0$. Therefore, $x_k*b_k\in V(G(S))$ such that $x_k*b_k*b_k=x_k*b_k^2=0$. Thus $x_k*b_k\in N(b_k)\cup \{b_k\}=\{a_k,b_k\}$.

  	\begin{center}
  	\begin{tikzpicture}[scale=1.8]
  		\coordinate (A) at (-2,0.5);
  		\coordinate (B) at (-1,1);
  		\coordinate (C) at (-1,0);
  		\coordinate (D) at (0,0);
  		\draw (A) -- (B) -- (C) -- cycle;
  		
  		\draw (B) -- (C) -- (D) ;
  		\fill (A) circle (2pt);
  		\fill (B) circle (2pt);
  		\fill (C) circle (2pt);
  		\fill (D) circle (2pt);
  		\node[left] at (A) {$x_k$};
  		\node[below][left] at (C) {$x_k*b_k=a_k$};
  		\node[above] at (B) {$b_k^2$};
  		\node[below] at (D) {$b_k$};
  
  	\coordinate (A) at (2,0.5);
  	\coordinate (B) at (3,1);
  	\coordinate (C) at (3,0);
  	\coordinate (D) at (4,0);
  	\draw (A) -- (B) -- (C) -- cycle;
  	
  	\draw (B) -- (C) -- (D) ;
  	\fill (A) circle (2pt);
  	\fill (B) circle (2pt);
  	\fill (C) circle (2pt);
  	\fill (D) circle (2pt);
  	\node[left] at (A) {$x_k$};
  	\node[below][left] at (C) {$a_k$};
  	\node[above] at (B) {$b_k^2$};
  	\node[below] at (D) {$b_k=x_k*b_k$};
  	\end{tikzpicture}
  \end{center}

  If possible, $x_k*b_k\in N(b_k)=\{a_k\}$, \ie $x_k*b_k=a_k$ then we get for all $j \neq k$, $1 \leq j,k \leq n$, $x_k*b_k*a_j=a_k*a_j=0$. Therefore by $(C)$,   $x_k*a_j=0$, for all $j \neq k$, $1 \leq j,k \leq n$. Also, $x_k*a_k=0$. Thus $x_k*a_i=0,$ for all $i, 1\leq i \leq n$. If $x_k \notin \{a_i~|~1\leq i \leq n\}$, then $G[\{x_k,a_i~|~1\leq i \leq n\}] \cong K_{n+1}$, which is a contradiction. 
  Thus, $x_k = a_j$, for some $j \neq k$, $1 \leq j,k \leq n$. Since $a_k=x_k*b_k=a_j*b_k=a_j$, for all $j \neq k$, $1 \leq j,k \leq n$, again a contradiction. 
  
  Now if $x_k*b_k=b_k$, then $0=x_k*b_k^2=x_k*b_k*b_k=b_k*b_k=b_k^2$, a contradiction. 
  
  Thus, there does not exist $x_k\in V(G(S))$ such that $a_k-b_k^2-x_k-a_k$ forms a triangle in $G(S)$. Therefore, we have $b_k^2\perp a_k$ in $G(S)$. Since $b_k$ is a unique element in $G(S)$ such that $b_k\perp a_k$ and hence $b_k^2=b_k$. Thus, $b_i^2=b_i$, for all $i$, $1\leq i\leq n$. \hfill $---(D)$
 
 Further, we claim that $b_1*b_2*\cdots*b_n=0$.\\ Suppose $x=b_1*b_2*\cdots*b_n\neq 0$. Clearly, $x*a_i=0$, for all $i$, $1 \leq i \leq n$. This gives $x \in V(G(S))$. 
 If $x \notin \{a_i~|~1\leq i \leq n\}$, then  $G[\{a_i,x~|~1 \leq i \leq n\}] \cong K_{n+1}$, which is a contradiction. Thus $x =a_k$ for some $k$, $1 \leq k\leq n$. Also by using equation $(D),$ we have $b_k^2=b_k$. Therefore, $0=a_k*b_k=x*b_k=b_1*b_2*\cdots*b_k*\cdots*b_n*b_k=b_1*b_2*\cdots*b_k^2*\cdots*b_n=b_1*b_2*\cdots*b_k*\cdots*b_n=x=a_k$, again a contradiction.  Hence, $x=b_1*b_2*\cdots*b_n=0$. \hfill $---(E)$
 
 \item Now, we claim that $a_i=b_1* b_2*\cdots *b_{i-1}*b_{i+1}*\cdots * b_n,$ for each $i$, $1\leq i\leq n$.

  For each $i$, $1\leq i\leq n$, let us denote  $t_i=b_1* b_2*\cdots *b_{i-1}*b_{i+1}*\cdots * b_n.$\\  
  By using equation $(C)$ repeatedly $(n-1)$ times, we obtain $t_i*a_i=a_i$, for all $1\leq i\leq n$. \\If $t_i=0$, then $a_i=t_i*a_i=0*a_i=0$, which is not possible. Thus $t_i \neq 0$. \\
 Similarly, if $t_i=b_i$, then $a_i=t_i*a_i=b_i*a_i=0$, which is also not possible. \\
 Further, by using $(E)$, we get $t_i*b_i=0,$ for all $i$, $1\leq i\leq n$.\\
 Therefore, we have $t_i\in V(G(S))\setminus \{b_i\} \quad \text{such that} \quad  t_i\in N(b_i)=\{a_i\}.$\\
Thus, $t_i=a_i$, for all $i$,  $1\leq i\leq n$, \ie $a_i=b_1* b_2*\cdots *b_{i-1}*b_{i+1}*\cdots * b_n,$ for each $i$, $1\leq i\leq n$. 
 Hence $a_i^2=a_i*a_i=t_i*a_i=a_i=b_1* b_2*\cdots *b_{i-1}*b_{i+1}*\cdots * b_n$, for all $i$, $1\leq i\leq n$.  \hfill $---(F)$
 
\item Let $X=\{b_i, b_j, \cdots, b_k\}$ and $Y=\{b_l, b_m, \cdots, b_q\}$ be any two distinct, nonempty subsets of $\{b_1,b_2,\cdots,b_n\}$. 
Thus, there exists 
$b_s\in X=\{b_i, b_j, \cdots, b_k\}$ such that $b_s \not \in Y= \{b_l, b_m, \cdots, b_q\}$. 
We claim that $b_i*b_j* \cdots* b_k\neq b_l*b_m*\cdots*b_q$.

Suppose $b_i*b_j* \cdots* b_s* \cdots* b_k= b_l*b_m*\cdots*b_q.$ \\Then $a_s*b_s=0$ and $a_s*b_t=a_s$, for all $t\neq s$, $1\leq t,s\leq n$, this together with $b_s\notin \{b_l, b_m, \cdots, b_q\}$ implies that \[0=a_s*(b_i*b_j*\cdots*b_s*\cdots*b_k)=a_s*(b_l* b_m*\cdots*b_q)=a_s,\] which is not possible. 

Hence  $\{b_i, b_j, \cdots, b_k\}\neq \{b_l, b_m, \cdots, b_q\}$, implies $b_i*b_j* \cdots* b_k\neq b_l*b_m*\cdots*b_q$. \hfill $---(G)$

 \item Now, we denote the sets $A_1, A_2, \cdots, A_{n-1}$ are as follows:

 \[
 \begin{aligned}
 A_1&=\{b_1,b_2,\cdots,b_n\}\\
 A_k&=\{b_i* b_j* \cdots * b_s~|~b_i, b_j, \cdots , b_s~\text{are all} ~k ~ \text{distinct elements in}~ A_1\},\\
 &\quad \quad\text{where} ~ 2\leq k\leq (n-1).
 \end{aligned}
\]

\medskip

Since from equation $(F)$, we have $a_i=b_1* b_2* \cdots* b_{i-1}*b_{i+1}*\cdots*b_{n}$. \\Thus
$A_{n-1}=\{a_i~|1\leq i\leq n\}$.


Clearly, all the sets $A_p, ~1\leq p\leq (n-1)$ are the nonempty subsets in $S$. Further, $A_1$ and $A_{n-1}$ are subsets of $V(G(S))$ such that $A_1 \cap A_{n-1} = \emptyset$.

Now, let $x\in A_k,~2\leq k\leq (n-2)$ be any element. Thus $x=b_i* b_j* \cdots * b_s$, where $b_i, b_j, \cdots , b_s$ are all $k$ distinct elements in $A_1$. Using equation $(E)$ and $(G)$, it is clear that $(x=)b_i* b_j* \cdots * b_s\neq b_1* b_2* \cdots * b_n(=0)$. Also, we have $x*a_i=x*a_j=\cdots=x*a_s=0$, implies that $x\in V(G(S))$. Thus $A_k\subseteq V(G(S))$, for all $k$, $2\leq k\leq (n-2)$. \\Hence $A_p(\neq \emptyset)\subseteq V(G(S))$, for all $p$, $1\leq p\leq (n-1)$. Therefore, we have $\displaystyle\bigcup_{p=1}^{n-1}A_p \subseteq V(G(S))$. Using equation $(D), (E)$ and $(G)$, it is easy to prove that $A_p\cap A_q=\emptyset$, for all $p\neq q$, $1\leq p, q\leq (n-1)$.

Thus, $\{A_k~|~1\leq k\leq n-1\}$ forms a nonempty, disjoint family of subsets in $V(G(S))$. Further, from equation $(D)$, we obtain that $x^2=x$, for every $x\in \displaystyle\bigcup_{p=1}^{n-1}A_p$.

\item Now, we claim that for all $x \in \displaystyle \bigcup_{p=1}^{n-1}{A_p}$, there exist unique $y \in \displaystyle \bigcup_{p=1}^{n-1}{A_p}$ such that $x \perp y$. 

As we have, $b_i\in A_1$ and $a_i\in A_{n-1}$ with $a_i\perp b_i$, it is clear that every element of $A_1\cup A_{n-1}$ has a unique complement in $A_1\cup A_{n-1}$. 

Let $x\in A_k$, where $2\leq k\leq (n-2)$. Thus, $x=b_i*b_j*\cdots*b_s$, where $b_i, b_j, \cdots , b_s$ are all $k$ distinct elements in $A_1$. 

Let $X=\{b_i, b_j, \cdots, b_s\}\subseteq A_1$ with $|X|=k$. Consider $Y=\{b_l, b_m,\cdots,b_q\}\subseteq A_1\setminus X$ with $|Y|=(n-k)$ and $y=b_l*b_m*\cdots*b_q\in A_{n-k}$. Note that, $X\cap Y=\emptyset$ and $X\cup Y=A_1$. From equation $(E)$ and $(G)$, it is easy to observe that $x$ and $y$ are distinct non-zero elements such that $x*y=b_1*b_2*\cdots*b_n=0$. Thus $x, y\in V(G(S))$ such that $x$ and $y$ are adjacent in $G(S)$. 

Suppose $x-y$ is an edge of a triangle, say $x-y-d-x$ in $G(S)$. 
\vspace{0.1cm}

Using equation $(C)$, it is clear that,
$x*a_t=a_t, \quad \text{if}\quad t\in \{l,m,\cdots, q\}\quad \text{and}\quad y*a_t=a_t, \quad \text{if} \quad t\in \{i,j,\cdots, s\}.$
Thus, for any $t\in \{l,m,\cdots, q\}\cup \{i,j,\cdots, s\}=\{1,2,\cdots, n\}$, we have either $x*a_t=a_t$ or $y*a_t=a_t$. This gives that either $x$ is not adjacent to $a_t$ or $y$ is not adjacent to $a_t$. Therefore, the edge $x-y$ does not forms a triangle with any vertex of $A_{n-1}$. Hence $d\notin A_{n-1}$.

Since $d$ is adjacent to both $x$ and $y$, we have $d*x=d*y=0$.  Thus, $d*a_t=d*x*a_t=0$, if $t\in \{l,m,\cdots, q\}$  and $d*a_t=d*y*a_t=0$, if $t\in \{i,j,\cdots, s\}$. Hence $d*a_t=0$, for all $t\in \{1,2,\cdots, n\}$. This implies that $A_{n-1}\cup \{d\}$ forms a clique with $n+1$ vertices in $G(S)$, which contradicts the fact that $\omega(G(S))=n$.
\vspace{-.4in}
\begin{center}
	\begin{tikzpicture}[scale=1.5]
		
		\coordinate (X) at (-1,1);
		\coordinate (Y) at (1,1);
		\coordinate (D) at (0,0);
		
		\draw (X) -- (Y) -- (D) -- cycle;
		
		\draw (2,-1) ellipse (0.6 and 1);
		\node at (2.2,-0.2) {$a_1$};
		\node at (2.2,-0.7) {$a_2$};
		\node at (2.2,-1.8) {$a_n$};
		
		\draw (D) -- (2,-0.2);
		\draw (D) -- (2,-0.7);
		\draw (D) -- (2,-1.8);
		
		\node[above] at (X) {$x$};
		\node[above] at (Y) {$y$};
		\node[below] at (D) {$d$};
		
		\fill (X) circle (2pt);
		\fill (Y) circle (2pt);
		\fill (D) circle (2pt);
		\fill (2,-0.2) circle (2pt);
		\fill (2,-0.7) circle (2pt);
		\fill (2,-1.2) circle (.6pt);
		\fill (2,-1.3) circle (.6pt);
		\fill (2,-1.4) circle (.6pt);
		\fill (2,-1.8) circle (2pt);
	\end{tikzpicture}
\end{center}

 Hence, the edge $x-y$ does not forms a triangle with any vertices in $G(S)$. Thus, $x\perp y$.  Therefore, for all $x \in A_k$, there exists $y \in A_{n-k}$ such that $x \perp y$, where $2 \leq k \leq (n-2)$. Using the assumption that, every vertex has a unique complement in $G(S)$, we get the uniqueness of $y$. 

Thus, for any $1\leq p\leq n-1$, if $x\in A_p$, then there exists unique $y\in A_{n-p}$ such that $x\perp y$. That is, for all $x \in \displaystyle \bigcup_{p=1}^{n-1}{A_p}$, there exists unique $y \in \displaystyle \bigcup_{p=1}^{n-1}{A_p}$ such that $x \perp y$.  \hfill $---(H)$

     \item Now, we claim that $deg(x)\geq 2^k-1$, for every $x\in A_k$, where $2\leq k\leq (n-1)$.

    First, we denote $y=x^{\perp}$ whenever $x\perp y$ in $G(S)$, which is possible due to the fact that the graph $G(S)$ is a complemented and every vertex has the unique complement in $G(S)$.
    
    Now, let $x\in A_k$ be any element, where $2\leq k\leq (n-1)$. Thus, $x=b_i* b_j* \cdots * b_s$, where $b_i, b_j, \cdots , b_s$ are all $k$ distinct elements in $A_1$.

     For simplicity, let $x=b_1*b_2*\cdots *b_k\in A_k$, where $2\leq k\leq (n-1)$. Then by $(H)$, there exists unique $y=b_{k+1}*b_{k+2}*\cdots*b_{n} \in A_{n-k}$ such that $x\perp y$, \ie $y=x^{\perp}=b_{k+1}*b_{k+2}*\cdots*b_{n}$.

     
     Further, we denote
     \[
     \begin{aligned}
     	N_0(x)&=A_{n-k}\cap N(x)=\{x^{\perp}\},\\
     	N_1(x)&=A_{n-k+1}\cap N(x)=\{x^{\perp}*b_i~|~1\leq i\leq k\},\\
     N_2(x)&=A_{n-k+2}\cap N(x)=\{x^{\perp}*b_i*b_j~|~1\leq i\neq j\leq k\}\\
     	&\vdots \\
     	N_{k-2}(x)&=A_{n-k+k-2}\cap N(x)=A_{n-2}\cap N(x)\\&=\{x^{\perp}*\underbrace{b_i*b_j*\cdots *b_s}_{(k-2)~\text{distinct terms }}~|~1\leq i, j,\cdots, s\leq k\}\\  	   N_{k-1}(x)&=A_{n-k+k-1}\cap N(x)=A_{n-1}\cap N(x)\\     	&=\{x^{\perp}*\underbrace{b_i*b_j*\cdots *b_p}_{(k-1)~\text{distinct terms}}~|~1\leq i, j,\cdots, p\leq k\}\\
     	& =\{a_i~|~1\leq i\leq k\}.     \end{aligned}  \]
         Clearly, $\displaystyle\bigcup_{i=0}^{k-1}N_i(x)\subseteq N(x)$.

     Further, we have $N_i(x)\subseteq A_{n-k+i}$, for every $i$, $0\leq i\leq (k-1)$, and $A_{n-k+p}\cap A_{n-k+q}=\emptyset$, for every $p\neq q$, $0\leq p,q\leq (k-1)$, which implies that $N_p(x)\cap N_q(x)=\emptyset$, for every $p\neq q$, $0\leq p,q\leq (k-1)$.
  
     Thus, $|N(x)|\geq |\displaystyle\bigcup_{i=0}^{k-1}N_i(x)|=\displaystyle\sum_{i=0}^{k-1}|N_i(x)|=\displaystyle\sum_{i=0}^{k-1}{}^{k}C_{i}=2^{k}-1$.
     
     Hence $deg(x)=|N(x)|\geq 2^{k}-1$, for every $x\in A_k$, where $2\leq k\leq (n-1)$.
        \medskip
        
    In particular, $deg(x)\geq 2^{n-1}-1$, for every $x\in A_{n-1}$, 
    
    \textit{i.e.}, $deg(a_i)\geq 2^{n-1}-1$, for every $1\leq i\leq n$.

\item Now, we claim that  $deg(a_i)=2^{n-1}-1$, for every $i$, $1\leq i\leq n$. 
First, we prove the claim for the index $i=1$, \ie for $a_1$; then, along similar lines, we can prove the claim for the remaining indices $2\leq i\leq n$.

Since $a_1\perp b_1$, we have $a_1^{\perp}=b_1$. Thus we have the following notations;\vspace{-0.01cm}
 \[
\begin{aligned}
 N_0(a_1)&=A_1\cap N(a_1)=\{b_1\} \\ N_1(a_1)&=A_2\cap N(a_1)=\{b_1*b_i~|~2\leq i\leq n\}\\ 
 N_2(a_1)&=A_3\cap N(a_1)=\{b_1*b_i*b_j~|~2\leq i\neq j\leq n\}\\ 
&\vdots\\
N_{n-3}(a_1)&=A_{n-2}\cap N(a_1)\\
&=\{b_1*\underbrace{b_i*b_j*\cdots *b_s}_{(n-3)~\text{distinct terms}}~|~2\leq i,j,\cdots, s\leq n\}\\
N_{n-2}(a_1)&=A_{n-1}\cap N(a_1)&\\
&=\{b_1*\underbrace{b_i*b_j*\cdots *b_p}_{(n-2)~\text{distinct terms}}~|~2\leq i, j,\cdots, p\leq n\}\\
&=\{a_2, a_3,\cdots, a_n\}. 
\end{aligned}\]

Clearly, $\displaystyle\bigcup_{i=0}^{n-2}N_i(a_1)\subseteq N(a_1)$. 

Suppose $\displaystyle\bigcup_{i=0}^{n-2}N_i(a_1)\subsetneqq N(a_1)$, \ie there exists $x \in N(a_1) \setminus \displaystyle \bigcup_{i=0}^{n-2}{N_i(a_1)}$.  By using an equation $(F)$, it is easy to observe that $x\neq a_1$.   Since $x \in V(G(S))$ and $G(S)$ is complemented, there exists $y\in V(G(S))$ such that $x\perp y$. Further, $x\in N(a_1)$, this imples that $y \notin N(a_1)$.

\medskip
Now, we show that $x,y\not\in \displaystyle \bigcup_{k=1}^{n-1}{A_k}$, with $A_{n-1} \subseteq N(x) \cup N(y)$. \\
Suppose $x \in A_k$ for some $k$, $1 \leq k \leq n-1$. \\Since $x \in N(a_1)$, it follows that
$x \in \displaystyle N_{k-1}(a_1) \subseteq \bigcup_{i=0}^{n-2} N_i(a_1),$
which is a contradiction. Therefore, $x \notin A_k$ for every $k$, $1 \leq k \leq n-1$, and hence
$x \notin \displaystyle\bigcup_{k=1}^{n-1} A_k$.

\medskip

If $y \in \displaystyle \bigcup_{k=1}^{n-1}{A_k}$, then by $(H)$, we have $ y^\perp \in\displaystyle \bigcup_{k=1}^{n-1}{A_k}$. Since every vertex has a unique complement in $G(S)$ and $x\perp y$, it follows that $y^{\perp}=x$. Hence $ x \in\displaystyle \bigcup_{k=1}^{n-1}{A_k}$, which is again a contradiction. Thus $y \notin \displaystyle \bigcup_{k=1}^{n-1}{A_k}$. 

\medskip

Further, if $A_{n-1}\subseteq N(x)$, \ie $x \in \displaystyle \bigcap_{j=1}^{n}{N(a_j)}$, then $G[\{x\} \cup A_{n-1}] \simeq K_{n+1}$, which contradicts to $\omega(G(S))=n\geq 3$. \\

Hence $A_{n-1}\not\subseteq N(x)$, \ie $x \not \in \displaystyle \bigcap_{j=1}^{n}{N(a_j)}.$ This together with $x\in N(a_1)$, we have $x \notin \displaystyle \bigcap_{j=2}^{n}{N(a_j)}$, \ie  $x \notin N(a_j)$, for some $j$, $2 \leq j \leq n$.  

\medskip

 For simplicity, let $j=2$, \ie $x \notin N(a_2)$. \\Suppose $y \notin N(a_2)$, then by Theorem \ref{2.1}, there exists a vertex $z\in V(G(S))$ such that the set $$\{x,a_j,b_2 ~|~ 1\leq j \leq n, j\neq 2\} \subseteq N(y) \cup N(a_2) \subseteq \overline{N(z)}=N(z) \cup \{z\}.$$

Since $b_2\in \overline{N(z)}=N(z) \cup \{z\}$, we have either $b_2=z$ or $b_2\in N(z)$.
Clearly, $b_2\neq z$, since $deg(b_2)=1$. Further, $b_2\in N(z)$ implies that $z=a_2$. Then $x\in \overline{N(z)}=\overline{N(a_2)}$, which contradicts to either the fact that $x\not \in N(a_2)$ or $x \notin \displaystyle\bigcup_{k=1}^{n-1} A_k$.
Thus, $y\in N(a_2)$, \ie $y$ is adjacent to $a_2$. 

On the similar way, we can easily prove that $y\in N(a_j)$, wherever $x\notin N(a_j)$, for all $j$, $1\leq j\leq n$. Thus, we have $A_{n-1} \subseteq N(x) \cup N(y)$. 

 Note that, $a_1\in N(x)$, $a_2\in N(y)$ and $N(x)\cap N(y)=\emptyset$, since $x \perp y$. Hence $deg(x)\geq 2$ and $deg(y)\geq 2$. Furthermore, if the set $(A_{n-1}\setminus \{a_2\})\cup \{x\}$ forms a clique of $n$ elements in $G(S)$, then, by applying similar technique as in observation $(4)$, we can easily prove that $deg(y)=1$, (similar to $deg(b_2)=1$), which contradicts the fact that $deg(y)\geq 2$. Similarly, if $(A_{n-1}\setminus \{a_1\})\cup \{y\}$ forms a clique of $n$ elements in $G(S)$, then, by applying similar technique as in observation $(4)$, we can easily prove that $deg(x)=1$, (similar to $deg(b_1)=1$), which contradicts the fact that $deg(x)\geq 2$. Thus, $G[(A_{n-1}\setminus \{a_2\})\cup \{x\}]\not \cong K_{n}$ and  $G[(A_{n-1}\setminus \{a_1\})\cup \{y\}]\not \cong K_{n}$. But we have 
 $G[(A_{n-1}\setminus \{a_2\})]\cong G[(A_{n-1}\setminus \{a_1\})]\cong K_{n-1}$. Hence $x$ is not adjacent to $a_p$, for some  $a_p\in (A_{n-1}\setminus \{a_2\})$ and $y$ is not adjacent to $a_q$, for some  $a_q\in (A_{n-1}\setminus \{a_1\})$. Clearly, $a_1, a_2, a_p$ and $a_q$ are all distinct elements in $A_{n-1}$ such that $a_1,a_p\in N(x)$ where as $a_2, a_q\in N(y)$.
 
  Without loss of generality, we assume that $a_p=a_3$ and $a_q=a_4$ and for any $k$, $5 \leq k \leq n$, $\{a_1, a_3, a_i ~|~ 5 \leq i \leq k\} \subseteq N(x)$ and $\{a_2, a_4, a_j~|~ (k+1) \leq j \leq n\} \subseteq N(y)$.

Further, for any $k, 5\leq k\leq n$, we look up the elements $b_1*b_3*b_5*b_6*\cdots*b_k$ and $b_2*b_4*b_{k+1}*b_{k+2}*\cdots*b_n$, 
Clearly, $b_1*b_3*b_5*b_6*\cdots*b_k \in A_{k-2}$ and $b_2*b_4*b_{k+1}*b_{k+2}*\cdots*b_n \in A_{n-k+2}$, whereas $x,y \not \in \displaystyle \bigcup_{k=1}^{n-1}{A_k}$. Thus $x,y\not \in \{b_1*b_3*b_5*b_6*\cdots*b_k, b_2*b_4*b_{k+1}*b_{k+2}*\cdots*b_n\}$. 

Now, once we prove that $x \perp (b_2*b_4*b_{k+1}*b_{k+2}*\cdots*b_n)$ and $y \perp (b_1*b_3*b_5*b_6*\cdots*b_k)$, then by using the assumption that every vertex has a unique complement in $G(S)$, we have, $x = (b_2*b_4*b_{k+1}*b_{k+2}*\cdots*b_n)$ and $y = (b_1*b_3*b_5*b_6*\cdots*b_k)$, which is a contradiction. This gives that there does not exists $x \in N(a_1) \setminus \displaystyle \bigcup_{i=0}^{n-2}{N_i(a_1)}$. Therefore, $ \displaystyle\bigcup_{i=0}^{n-2}N_i(a_1)= N(a_1)$ and hence $deg(a_1)=2^{n-1}-1$. 

Now, we prove that $x \perp (b_2*b_4*b_{k+1}*b_{k+2}*\cdots*b_n)$ and $y \perp (b_1*b_3*b_5*b_6*\cdots*b_k)$.

Suppose $x \notin N(b_2*b_4*b_{k+1}*b_{k+2}*\cdots*b_n)$,  then the set $$\{b_1*b_3*b_5*b_6*\cdots*b_k, y, a_i, ~|~1\leq i\leq n\} \subseteq N(b_2*b_4*b_{k+1}*b_{k+2}*\cdots*b_n) \cup N(x).$$ Applying Theorem \ref{2.1}, there exists a vertex $z\in V(G(S))$ such that $$N(b_2*b_4*b_{k+1}*b_{k+2}*\cdots*b_n) \cup N(x)\subseteq \overline{N(z)}=N(z) \cup \{z\}.$$

Therefore, $$\{b_1*b_3*b_5*b_6*\cdots*b_k, y, a_i, ~|~1\leq i\leq n\}\subseteq \overline{N(z)}=N(z) \cup \{z\}.$$

 Then, we have the following possibilities:
\begin{itemize}
\item If $z\in \{a_1, a_3,a_i~|~5\leq i\leq k\}\subseteq N(x)$, then $z \in N(x)\cap N(y)$, a contradiction, since $x \perp y$.
\item If $z\in \{a_2, a_4,a_j~|~(k+1)\leq j\leq n\}$, then by equation $(C)$,  $(b_1*b_3*b_5*b_6*\cdots*b_k)*z=z$, a contradiction to $(b_1*b_3*b_5*b_6*\cdots*b_k)\in \overline{N(z)}$.

\item If $z\notin \{a_i~|~1 \leq i \leq n\}$, \ie $z \notin A_{n-1}$, then $\{z\} \cup A_{n-1} \cong K_{n+1}$, which contradicts to  $\omega(G(S))=n\geq 3$.
\end{itemize}
Thus, $x \in N(b_2*b_4*b_{k+1}*b_{k+2}\cdots*b_n)$.

Similarly, suppose $y \notin N(b_1*b_3*b_5*b_6*\cdots*b_k)$. Then the set 
$$\{b_2*b_4*b_{k+1}*b_{k+2}*\cdots*b_n, x, a_i, ~|~1\leq i\leq n\} \subseteq N(b_1*b_3*b_5*b_6*\cdots*b_k) \cup N(y).$$
Applying Theorem \ref{2.1}, there exists  a vertex $z\in V(G(S))$ such that  $$N(b_1*b_3*b_5*b_6*\cdots*b_k)\cup N(y)\subseteq ~\overline{N(z)} =N(z)\cup \{z\}.$$ 
Therefore, $$\{b_2*b_4*b_{k+1}*b_{k+2}*\cdots*b_n, x, a_i, ~|~1\leq i\leq n\}\subseteq \overline{N(z)}=N(z)\cup \{z\}.$$
Thus, we have the following possibilities:

\begin{itemize}
	\item If $z\in \{a_1, a_3,a_i~|~5\leq i\leq k\}\subseteq N(x)$, then by eqaution $(C)$, $(b_2*b_4*b_{k+1}*b_{k+2}*\cdots*b_n)*z=z$ a contradiction to $(b_2*b_4*b_{k+1}*b_{k+2}*\cdots*b_n)\in \overline{N(z)}$.
	\item If $z\in \{a_2, a_4,a_j~|~(k+1)\leq j\leq n\}\subseteq N(y)$, then $z\in N(x)\cap N(y)$, a contradiction, since, $x\perp y$.
	
	\item If $z\notin \{a_i~|~1 \leq i \leq n\}$, \ie $z \notin A_{n-1}$, then $\{z\} \cup A_{n-1} \cong K_{n+1}$, which contradicts to  $\omega(G(S))=n\geq 3$.
\end{itemize}

Thus, $y \in N(b_1*b_3*b_5*b_6*\cdots*b_k)$.\\

By claim $(10)$, we have $(b_1*b_3*b_5*b_6*\cdots*b_k) \perp (b_2*b_4*b_{k+1}*b_{k+2}*\cdots*b_n)$. Thus we get $x-(b_2*b_4*b_{k+1}*b_{k+2}\cdots*b_n)-(b_1*b_3*b_5*b_6*\cdots*b_k)-y-x$ a path in $G(S)$, where $x\perp y.$

 Therefore, the edges $x - (b_2*b_4*b_{k+1}*b_{k+2}*\cdots*b_n)$ and $y-(b_1*b_3*b_5*b_6*\cdots*b_k)$ are the edges of a triangle, say $s-x-(b_2*b_4*b_{k+1}*b_{k+2}*\cdots*b_n)-s$ and $t - y - (b_1*b_3*b_5*b_6*\cdots*b_k) - t$ in $G(S)$. 

\medskip

If $s=t$, the $x-y-t(=s)-x$ forms a triangle in $G(S)$, which is not possible, since $x\perp y$. 

Thus, $s$ and $t$ are distinct vertices in $G(S)$ with $t\notin N(x)$ and $s\notin N(y)$.

\begin{center}
	\begin{tikzpicture}[scale=1.2]
		
		\draw (0,0) rectangle (4,3);
		
		\draw (0,3) -- (1.4,1.5);
		\draw (0,0) -- (1.4,1.5);
		\draw (4,3) -- (2.6,1.5);
		\draw (4,0) -- (2.6,1.5);
		\node at (1.4,1.1) {$s$};
		\node at (2.6,1.1) {$t$};
		\node[above left] at (0,3) {$x$};
		\node[above right] at (4,3) {$y$};
		\node[below] at (0,0) {$b_2*b_4*b_{k+1}*\cdots*b_n$};
		\node[below] at (4,0) {$b_1*b_3*b_5*b_6*\cdots*b_k$};
		
		\fill (0,3) circle (2pt);
		\fill (4,3) circle (2pt);
		\fill (0,0) circle (2pt);
		\fill (4,0) circle (2pt);
		\fill (1.4,1.5) circle (2pt);
		\fill (2.6,1.5) circle (2pt);
		
		\draw (-2,3) ellipse (0.6 and 1);
		\node at (-2.2,3.6) {$a_1$};
		\node at (-2.2,3.4) {$a_3$};
		\node at (-2.2,3.2) {$a_5$};
		\node at (-2.2,3.0) {$a_6$};
		\node at (-2.2,2.3) {$a_k$};
		
		\draw (0,3) -- (-1.9,3.6);
		\draw (0,3) -- (-1.9,3.4);
		\draw (0,3) -- (-1.9,3.2);
		\draw (0,3) -- (-1.9,3.0);
		\draw (0,3) -- (-1.9,2.3);

		\fill (-1.9,3.6) circle (2pt);
		\fill (-1.9,3.4) circle (2pt);
		\fill (-1.9,3.2) circle (2pt);
		\fill (-1.9,3.0) circle (2pt);
		\fill (-1.9,2.7) circle (.6pt);
		\fill (-1.9,2.6) circle (.6pt);
		\fill (-1.9,2.5) circle (.6pt);
		\fill (-1.9,2.3) circle (2pt);
		\draw (6,3) ellipse (0.6 and 1);
		\node at (6.2,3.6) {$a_2$};
		\node at (6.2,3.4) {$a_4$};
		\node at (6.2,3.2) {$a_k$};
		\node at (6.3,3.0) {$a_{k+1}$};
		\node at (6.2,2.3) {$a_n$};
		
		\draw (4,3) -- (5.9,3.6);
		\draw (4,3) -- (5.9,3.4);
		\draw (4,3) -- (5.9,3.2);
		\draw (4,3) -- (5.9,3.0);
		\draw (4,3) -- (5.9,2.3);

		\fill (5.9,3.6) circle (2pt);
		\fill (5.9,3.4) circle (2pt);
		\fill (5.9,3.2) circle (2pt);
		\fill (5.9,3.0) circle (2pt);
		\fill (5.9,2.7) circle (.6pt);
		\fill (5.9,2.6) circle (.6pt);
		\fill (5.9,2.5) circle (.6pt);
		\fill (5.9,2.3) circle (2pt);
	\end{tikzpicture}
\end{center}


Since  $t\not\in N(x),$ \ie $x$ and $t$ are not adjacent, then by Theorem \ref{2.1}, there exists $z\in V(G(S))$ such that $$\{s,y,b_2*b_4*b_{k+1}*b_{k+2}*\cdots*b_n, b_1*b_3*b_5*b_6*\cdots*b_k \}\subseteq N(x) \cup N(t)\subseteq \overline{N(z)}=N(z)\cup \{z\}.$$

Consider the following possibilities:
\begin{itemize}\item
 If $z=b_2*b_4*b_{k+1}*b_{k+2}*\cdots*b_n$, then $y\in N(b_2*b_4*b_{k+1}*b_{k+2}*\cdots*b_n)$, which is not possible.
 \item If $z=b_1*b_3*b_5*b_6*\cdots*b_k$, then $s\in N(b_1*b_3*b_5*b_6*\cdots*b_k)$, which is also not possible.
 \item  If $z\notin \{b_1*b_3*b_5*b_6*\cdots*b_k, ~ b_2*b_4*b_{k+1}*b_{k+2}*\cdots*b_n\}$, then we have, $z ~-~ (b_1*b_3*b_5*b_6*\cdots*b_k)~ - ~(b_2*b_4*b_{k+1}*b_{k+2}*\cdots*b_n)~ -~ z$ forms a triangle in $G(S)$, which is a contradiction to $(b_1*b_3*b_5*b_6*\cdots*b_k)~ \perp ~(b_2*b_4*b_{k+1}*b_{k+2}*\cdots*b_n)$.

    \end{itemize}

\medskip

Further, we have $s\not\in N(y),$ \ie $y$ and $s$ are not adjacent in $G(S)$. Therefore, applying Theorem \ref{2.1}, there exists $z\in V(G(S))$ such that $$\{x,t,b_2*b_4*b_{k+1}*b_{k+2}*\cdots*b_n, b_1*b_3*b_5*b_6*\cdots*b_k \}\subseteq N(y) \cup N(s)\subseteq \overline{N(z)}=N(z)\cup \{z\}.$$

Again, we consider the following possibilities: \begin{itemize}\item If $z=b_2*b_4*b_{k+1}*b_{k+2}*\cdots*b_n$, then $t\in N(b_2*b_4*b_{k+1}*b_{k+2}*\cdots*b_n)$, which is not possible.
\item If $z=b_1*b_3*b_5*b_6*\cdots*b_k$, then $x\in N(b_1*b_3*b_5*b_6*\cdots*b_k)$, which is also not possible.
\item If $z\notin \{b_1*b_3*b_5*b_6*\cdots*b_k, ~ b_2*b_4*b_{k+1}*b_{k+2}*\cdots*b_n\}$, then we have, $z ~-~ (b_1*b_3*b_5*b_6*\cdots*b_k)~ - ~(b_2*b_4*b_{k+1}*b_{k+2}*\cdots*b_n)~ -~ z$ forms a triangle in $G(S)$, which is a contradiction to $(b_1*b_3*b_5*b_6*\cdots*b_k)~ \perp ~(b_2*b_4*b_{k+1}*b_{k+2}*\cdots*b_n)$.
\end{itemize}

Hence, there does not exist $s,t \in V(G(S))$ such that $x - s - (b_2*b_4*b_{k+1}*b_{k+2}*\cdots**b_n) - x$ and $y - t - (b_1*b_3*b_5*b_6*\cdots*b_k) - y$ forms a triangles in $G(S)$. Thus, $x \perp (b_2*b_4*b_{k+1}*b_{k+2}*\cdots*b_n)$ and $y \perp (b_1*b_3*b_5*b_6*\cdots*b_k)$. \\Thus, $ \displaystyle\bigcup_{i=0}^{n-2}N_i(a_1)= N(a_1)$ and hence $deg(a_1)=2^{n-1}-1$. 

On similar way, we can prove that $deg(a_i)=2^{n-1}-1$, for all $a_i \in A_{n-1}$.

\item Now, we claim that $deg(x)=2^{k}-1$ for all $x \in A_k$, where $2 \leq k \leq (n-2)$.

Let $x \in A_k$ be any element, where $2 \leq k \leq (n-2)$. Therefore $x=b_i* b_j* \cdots * b_s$, where $b_i, b_j, \cdots , b_s$ are all $k$ distinct elements in $A_1$.

For simplicity, let $x=b_1*b_2*\cdots *b_k\in A_k$. Then  $\displaystyle\bigcup_{i=0}^{k-1}N_i(x)\subseteq N(x)$. \\Suppose $\displaystyle\bigcup_{i=0}^{k-1}N_i(x)\subsetneqq N(x)$, \ie there exists $y\in N(x)\setminus \displaystyle\bigcup_{i=0}^{k-1}N_i(x)$.\\
Since $y\in N(x)$, we have $y*x=y*(b_1*b_2*\cdots *b_k)=0$. Suppose $y*b_1*b_2*\cdots *b_{k-1}\neq 0$. Then $y*b_1*b_2*\cdots *b_{k-1}\in V(G(S))$ such that 
$y*b_1*b_2*\cdots *b_{k-1}\in N(b_k)\cup \{b_k\}$. 

If $y*b_1*b_2*\cdots *b_{k-1}=b_k$, then using equation $(A)$ with $k\neq 1$, we have, $0=y*b_1*b_2*\cdots *b_{k-1}*a_1=b_k*a_1=a_1$, which is a contradiction. 

If $y*b_1*b_2*\cdots *b_{k-1}\in N(b_k)=\{a_k\}$, then $y*b_1*b_2*\cdots *b_{k-1}*a_j=a_k*a_j=0$, for all $j$, $(k+1)\leq j\leq n$.  Again using an equation $(A)$, we get, $y*a_j=0$, for all $j$, $(k+1)\leq j\leq n$. 

Thus, for any $j$, $(k+1)\leq j\leq n$, we have $$y\in N(a_j)=\displaystyle \bigcup_{i=0}^{n-2}N_i(a_j)=\displaystyle \bigcup_{i=0}^{n-2}(A_{i+1}\cap N(a_j)).$$ Hence, $y\in A_{p+1}\cap N(a_j)$, for some $p$, $0\leq p\leq n-2$. \\
If $p=0$, \ie $y\in A_1\cap N(a_j)=\{b_j\}$. Hence $y=b_j$. Then $x\in N(y)=N(b_j)=\{a_j\}\subseteq A_{n-1}$, which contradicts to $x \in A_k$, where $2 \leq k \leq (n-2)$.

Thus $y\in A_{p+1}\cap N(a_j)$, for some $p$, $1\leq p\leq n-2$.

Hence $y=b_j*b_r*\cdots*b_t$, where $b_j,b_r, \cdots,b_t$ are all $p+1$ distinct elements in $A_1$.

Clearly, $0=x*y=b_1*b_2*\cdots*b_k*b_j*b_r*\cdots*b_t$ which implies that $k+p+1\geq n$. This together with $1\leq p\leq n-2$, we have $2\leq n-k\leq p+1\leq n-1$. Thus $y\in A_{p+1}$, for some $n-k\leq p+1\leq n-1$.  Also, we have $y\in N(x)$. Therefore, $y\in A_{p+1}\cap N(x)$, for some $n-k\leq p+1\leq n-1$. This implies that $y\in N_{q}(x)=A_{n-k+q}\cap N(x)$, for some $q$, $0\leq q\leq k-1$.  

Thus, $y\in \displaystyle \bigcup_{i=0}^{k-1}N_i(x)$, again a contradiction. 

Hence $y*b_1*b_2*\cdots*b_{k-1}=0$.

Similarly, if we assume that $y*b_1*b_2*\cdots*b_{k-2}\neq 0$, then $y*b_1*b_2*\cdots*b_{k-2}=b_{k-1}$ or $y*b_1*b_2*\cdots*b_{k-2}=a_{k-1}$. In both the cases, we get a contradiction. 
Thus $y*b_1*b_2*\cdots*b_{k-2}= 0$. 

Continuing in this way, we get, $y*b_1= 0$. Therefore, $y\in N(b_1)\cup \{b_1\}=\{a_1,b_1\}$. 

If $y=b_1$, then $b_1^2=0$, a contradiction. 

If $y=a_1$, then $y\in A_{n-1}\cap N(x)=N_{k-1}(x)\subseteq \displaystyle \bigcup_{i=0}^{k-1}N_{i}(x)$, which is again a contradiction. 

Therefore, there does not exists  $y\in N(x)\setminus \displaystyle\bigcup_{i=0}^{k-1}N_i(x)$. Thus $N(x)= \displaystyle\bigcup_{i=0}^{k-1}N_i(x)$ and \\ $deg(x)=|N(x)|=|\displaystyle\bigcup_{i=0}^{k-1}N_i(x)|=2^{k}-1$, for every $x\in A_k$, where $2\leq k\leq (n-2)$.
\item It is easy to observe that, if $x\in \displaystyle \bigcup_{k=1}^{n-1}A_k$, then $N(x)\subseteq \displaystyle \bigcup_{k=1}^{n-1}A_k$. 
Further, we have  $\displaystyle \bigcup_{k=1}^{n-1}A_k\subseteq V(G(S))$. Now, suppose there exists $y \in V(G(S))$ such that $y \notin \displaystyle \bigcup_{k=1}^{n-1}{A_k}$. Since by Theorem \ref{2.1}, $G(S)$ is connected. Also, we have $\displaystyle \bigcup_{k=1}^{n-1}A_k\neq \emptyset$. Thus, $y$ is adjacent to $x$, for some $x\in \displaystyle \bigcup_{k=1}^{n-1}A_k$. Hence $y\in N(x)\subseteq \displaystyle \bigcup_{k=1}^{n-1}A_k$, a contradiction. Therefore, we have $\displaystyle \bigcup_{k=1}^{n-1}A_k= V(G(S))$. 

In a nutshell, we have proved that the sets $A_k,~ (1\leq k\leq n-1)$,  are nonempty, disjoint subsets of $V(G(S))$ such that $$V(G(S))=\displaystyle\bigcup_{k=1}^{n-1}A_k,$$ \ie the family of sets $\{A_k~|~1\leq k\leq (n-1)\}$ forms a partition of the vertex set $V(G(S))$. Moreover, we have shown that $$x^2=x, \quad \text{for every} \quad x\in \displaystyle\bigcup_{k=1}^{n-1}A_k=V(G(S)) \quad \text{and}$$ $$deg(x)=2^k-1, \quad \text {for every}  \quad x\in A_k ~(1\leq k\leq n-1).$$ 

Therefore, we have $$|V(G(S))|=|\displaystyle\bigcup_{k=1}^{n-1}A_k|=\displaystyle\sum_{k=1}^{n-1}|A_k|=\displaystyle\sum_{k=1}^{n-1}{}^{n}C_{k}=2^n-2.$$

Let $X=\{x_1,x_2, \cdots x_n\}$ be any nonempty set and $\mathcal{P}(X)$ denotes a power set of $X$. As we have $|V(G(\mathcal{P}(X)))|=2^n-2.$
Thus we define a bijective map $f:V(G(S)) \rightarrow V(G(\mathcal{P}(X)))$ as follows;
\[
\begin{aligned}
f(a_i)&=\{x_i\}, \quad \text{for all}~a_i \in A_{n-1},\\ 
f(b_i)&=X \setminus \{x_i\}, \quad \text{for all}~ b_i \in A_1, ~\text{and}\\
f(x)&=X \setminus \{x_i,x_j,\cdots,x_s\},\quad \text{for all}~ x~(=\underbrace{b_i*b_j*\cdots*b_s}_{~~k~\text{distinct terms in}~ A_1}) \in A_k,\\& \quad \text{where} ~~3 \leq k \leq n-2.
\end{aligned}\]
It is easy to verify that the function $f$ is graph homomorphism and hence $f$ is an isomorphism from $G(S)$ to $G(\mathcal{P}(X))$, \ie $G(S)\cong G(\mathcal{P}(X))$.

Now, we assume that $S=V(G(S))\cup \{0,1\}$, where  $0$ and $1$ denotes the zero element and  the unity in $S$ respectively, \ie $x*0=0$ and $x*1=x$ for every $x\in S$. Clearly, $S$ is a commutative idempotent semigroup with $|S|=|\mathcal{P}(X)|$.

We can define the bijective map $T:S\rightarrow \mathcal{P}(X)$ as follows: 
\[
\begin{aligned}
T(x)&=f(x), ~~ \text{for every}~ x\in V(G(S))=S\setminus \{0,1\}, \\ 
\quad &T(0)=\emptyset, ~~\text{and}~~ T(1)=X.
\end{aligned}
\]

Clearly, $T(x*y)=T(x)\cap T(y)$, for every $x,y\in \{0,1\}$. \\Let $x, y\in V(G(S))=\displaystyle\bigcup_{k=1}^{n-1}A_k$ be any elements. Thus, $x\in A_p$ and $y\in A_q$, for some $1\leq p,q\leq n-1$. Hence $x=\underbrace{b_i*b_j*\cdots*b_s}_{~~p~\text{distinct terms in}~ A_1}$ and $y=\underbrace{b_l*b_m*\cdots*b_t}_{~~q~\text{distinct terms in}~ A_1}$. \\Therefore, $$x*y=\underbrace{b_i*b_j*\cdots*b_t}_{~~r ~\text{distinct terms in}~ A_1}, \quad \text{ where}\quad 1\leq r\leq (p+q).$$ Thus, $$T(x*y)= X\setminus \quad \{\underbrace{x_i, x_j, \cdots, x_t}_{r ~\text{distinct terms in}~ X}~|~1\leq r\leq (p+q)\}.$$

Further, we have $$T(x)=X\setminus \quad \{\underbrace{x_i, x_j, \cdots, x_s}_{p ~\text{distinct terms in}~ X}\} \quad \text{and}\quad T(y)=X\setminus \quad \{\underbrace{x_l, x_m, \cdots, x_t}_{q ~\text{distinct terms in}~ X}\}.$$ Clearly, 
\[
\begin{aligned}
T(x)\cap T(y)&=X\setminus \quad \{\{\underbrace{x_i, x_j, \cdots, x_s}_{p ~\text{distinct terms in}~ X}\} \cup \{\underbrace{x_l, x_m, \cdots, x_t}_{q ~\text{distinct terms in}~ X}\}\}\\
&=X\setminus \quad \{\underbrace{x_i, x_j, \cdots, x_t}_{r ~\text{distinct terms in}~ X}~|~1\leq r\leq (p+q)\}\\&=T(x*y). \end{aligned}\] 
 Therefore, $T(x*y)=T(x)\cap T(y)$, for all $x,y\in V(G(S))\cup \{0,1\}=S$.\\ Thus, the bijective map $T$ is a semigroup homomorphism and hence it is an isomorphism from $S$ to $\mathcal{P}(X)$, \ie $S\cong \mathcal{P}(X)$.

\end{enumerate}
\end{proof}


\begin{thebibliography}{99}
\bibitem{AL} D. Anderson and P. Livingstone, {\it The zero-divisor graph of a commutative ring}, J. Algebra, {\bf 217} (1999), 434-447.

\bibitem{AN} D.	Anderson and M. Naseer, {\it Beck’s coloring of a commutative ring},	J. Algebra, {\bf 159} (1993), 500–514.

\bibitem{B} I. Beck,  {\it Coloring of  commutative rings},	J. Algebra, {\bf 116} (1988), 208-226.
	
\bibitem{C} C. Bender, P. Cappaert, R. DeCoste, and L. DeMeyer, {\it Complemented zero-divisor graphs associated with finite commutative semigroups}, Comm. Algebra, \textbf{52(7)} (2024), 2852-2867.
	
\bibitem{DD} F. DeMeyer, L. DeMeyer, {\it Zero-divisor graphs of semigroups}, J. Algebra, {\bf 283} (2005), 190–198.

\bibitem{DMS} F. DeMeyer, T. McKenzie and K. Schneider, {\it The zero-divisor graph of a commutative semigroup}, Semigroup Forum, {\bf 65} (2002), 206-214.

\bibitem{KTJ} A. Khiste, G. Tarte and Vinayak Joshi, {\it Counter example to conjectures on complemented zero-divisor graphs of semigroups}, Bull. Austral. Math. Soc. {\bf 113} (2026), 189-193.

\bibitem{L}	J. LaGrange, {\it Complemented zero divisor graphs and Boolean rings}, J. Algebra, {\bf 315} (2007), 600-611.
	
\bibitem{LW} D.	Lu and T. Wu,  {\it The zero-divisor graphs of partially ordered sets and an application to semigroups}, Graphs Combin., {\bf 26} (2010), 793–804.


\end{thebibliography}
\end{document}